\newtheorem{definition}{Definition}
\newtheorem{theorem}[definition]{Theorem}
\newtheorem{proposition}[definition]{Proposition}
\newtheorem{corollary}[definition]{Corollary}
\newtheorem{lemma}[definition]{Lemma}
\newtheorem{fact}[definition]{Fact}
\newtheorem{remark}[definition]{Remark}
\newtheorem{example}[definition]{Example}
\newtheorem{question}[definition]{Question}
\newtheorem*{claim*}{Claim}
\newcommand{\0}{\emptyset}
\newcommand{\mc}{\mathcal}
\newcommand{\RR}{\mathbb{R}}
\newcommand{\NN}{\mathbb{N}}
\newcommand{\QQ}{\mathbb{Q}}
\newcommand{\symdif}{\triangle}
\newcommand{\IFF}{\Leftrightarrow}
\newcommand{\IMP}{\Rightarrow}
\newcommand{\Ii}{\mc{I}}
\newcommand{\Jj}{\mc{J}}
\newcommand{\Mm}{\mc{M}}
\newcommand{\Nn}{\mc{N}}
\newcommand{\Ff}{\mc{F}}
\newcommand{\Dd}{\mc{D}}
\newcommand{\Aa}{\mc{A}}
\newcommand{\Bb}{\mc{B}}
\newcommand{\Cc}{\mc{C}}
\newcommand{\Gg}{\mc{G}}
\newcommand{\bez}{\backslash}
\newcommand{\se}{\subseteq}
\newcommand{\sen}{\subsetneq}
\newcommand{\es}{\supseteq}
\newcommand{\nes}{\supsetneq}
\newcommand{\rest}{\restriction}
\newcommand{\wt}{\widetilde}
\newcommand{\quotient}[2]{\raisebox{.2em}{$#1$}\big/\raisebox{-0.2em}{$#2$}}
\newcommand{\tn}[1]{\textnormal{#1}}
\newcommand{\ti}[1]{\textit{#1}}
\newcommand{\cf}{\textnormal{cf}}
\newcommand{\Bor}{\tn{Bor}}
\newcommand{\ccc}{\tn{ccc}}
\newcommand{\cov}{\tn{cov}}
\newcommand{\splitt}{\textnormal{split}}
\newcommand{\omsplit}{\textnormal{$\omega$-split}}
\newcommand{\Succ}{\textnormal{Succ}}
\newcommand{\succe}{\textnormal{succ}}
\newcommand{\stem}{\textnormal{stem}}
\def\ca{\mathcal{A}}
\def\cb{\mathcal{B}}
\def\Bb{\mathcal{B}}
\def\cf{\mathcal{F}}
\def\bbq{\mathbb{Q}}
\def\bbr{\mathbb{R}}
\def\c{\mathfrak{c}}
\def\w{\omega}
\def\cov{\rm cov}
\title{Ideals with Smital properties}
\author{Marcin Michalski}
\email{marcin.k.michalski@pwr.edu.pl}
\author{Robert Rałowski}
\email{robert.ralowski@pwr.edu.pl}
\author{Szymon Żeberski}
\email{szymon.zeberski@pwr.edu.pl}
\address{Marcin Michalski, Robert Rałowski, Szymon Żeberski, Wrocław University of Science and Technology, Faculty of Pure and Applied Mathematics, Wybrzeże Wyspiańskiego 27,	50-370 Wrocław, Poland}
\date{}
\begin{document}

\keywords{Smital property, Steinhaus property, countable chain condition, Fubini product, maximal invariant ideal, orthogonal ideals.\\}

\subjclass[2020]{Primary: 03E75, 28A05; Secondary: 03E17, 54H05.}

\begin{abstract}
	A $\sigma$-ideal $\Ii$ on a Polish group $(X,+)$ has Smital Property if for every dense set $D$ and a Borel $\Ii$-positive set $B$ the algebraic sum $D+B$ is a complement of a set from $\Ii$. We consider several variants of this property and study their connections with countable chain condition, maximality and how well they are preserved via Fubini products.
\end{abstract}

\maketitle

\section{Introduction}
We adopt the usual set-theoretical notation. We say that $X$ is a Polish space if it is a separable and completely metrisable topological space. $\Bor(X)$ denotes the family of Borel subsets of $X$. $\Mm(X)$ and $\Nn(X)$ are families of meager and null subsets of $X$ respectively. Sometimes we will write briefly $\Mm$ and $\Nn$ if the underlying space is clear from the context.
\\
Let $\Aa$ be a ($\sigma$-)algebra and $\Ii$ be a ($\sigma$-)ideal on an Abelian Polish group $(X,+)$. Throughout the paper we assume that $\mc{I}$ contains all singletons and
	\[
		(\forall I\in \Ii) (\exists A\in\Aa\cap \Ii) (I\se A).
	\]
	If $\Aa$ is not explicitly stated we assume $\Aa=\Bor(X)$. In such a case $\Ii$ has a Borel base.
		
We say that a set $A$ is $\Ii$-positive, if $A\notin\Ii$. A is called $\Ii$-residual if $A^c\in\Ii$, we denote this fact by $A\in\Ii^\star$.

For any sets $A, B\se X$ we denote the algebraic sum of these sets by $A+B$, i.e.
\[
	A+B=\{a+b: a\in A,\; b\in B\}.
\]

Let us now recall the classical notion of the Steinhaus property and probably less famous notions of Smital properties which were studied in \cite{BFN}.
\begin{definition}
	We say that a pair $(\Aa,\Ii)$ has
	\begin{enumerate}[(i)]
		\item the Steinhaus Property if for any $A,B\in \Aa\bez \Ii$ the set $A-B$ has a nonempty interior;
		\item the Smital Property, briefly SP, if for every dense set $D$ and every $A\in\Aa\bez\Ii$ the set $A+D$ is $\Ii$-residual;
		\item the Weaker Smital Property, briefly WSP, if there exists a countable and dense set $D$ such that for every $A\in\Aa\bez\Ii$ the set $A+D$ is $\Ii$-residual;
		\item the Very Weak Smital Property, briefly VWSP, if for every $A\in\Aa\bez\Ii$ there is a countable set D such that the set $A+D$ is $\Ii$-residual.
	\end{enumerate}		
\end{definition}

Note that $\Mm$ and $\Nn$ have all of these properties.

The following Proposition seems to be a folklore but we could not find the proof of the second part in the literature.
\begin{proposition}
	The Steinhaus Property is equivalent to SP.
\end{proposition}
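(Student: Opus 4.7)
The plan is to establish each implication by contrapositive, in each case using the negation of the conclusion to manufacture a dense set to which the other property applies.

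\emph{SP $\Rightarrow$ Steinhaus} is the short direction. Fix $A, B \in \Aa \bez \Ii$ and suppose, toward contradiction, that $A - B$ has empty interior. Then $D := X \bez (A-B)$ is dense, so SP applied to $B$ and $D$ makes $B + D$ an $\Ii$-residual set. However $A$ and $B + D$ are disjoint by construction: an equality $a = b + d$ with $a \in A$, $b \in B$, $d \in D$ would give $d = a - b \in (A-B) \cap D = \emptyset$. Hence $A \se (B+D)^c \in \Ii$, contradicting $A \notin \Ii$.

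\emph{Steinhaus $\Rightarrow$ SP} requires a small measurability reduction. Fix $A \in \Aa \bez \Ii$ and a dense $D \se X$; the aim is $(A+D)^c \in \Ii$. Since $A + D$ is a union of uncountably many translates of $A$, it may fail to be $\Aa$-measurable, so first pass to a countable dense subset $D_0 \se D$ (which exists by separability of the Polish space $X$). Then $A + D_0 = \bigcup_{d \in D_0}(A + d)$ lies in $\Aa$, and from $A + D_0 \se A + D$ it suffices to prove $(A + D_0)^c \in \Ii$. Suppose not, and set $B := (A + D_0)^c \in \Aa \bez \Ii$. Unwinding $b \notin A + D_0$ yields $a - b \neq -d$ for every $a \in A$ and $d \in D_0$, so $A - B \se X \bez (-D_0)$. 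Since $x \mapsto -x$ is a homeomorphism of $X$, the set $-D_0$ is dense, and therefore $A - B$ has empty interior, contradicting the Steinhaus property applied to the pair $A, B$.

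The two arguments are symmetric in spirit: each converts a topological fact (empty interior of an algebraic difference, or density of a countable translate) into the algebraic disjointness needed to close the contradiction. The only genuine subtlety is the reduction to a countable dense $D_0$ in the second direction; without it, $B = (A + D_0)^c$ could fail to belong to $\Aa$ and the Steinhaus hypothesis could not be invoked.
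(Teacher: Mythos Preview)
Your proof is correct and follows essentially the same route as the paper's: both directions proceed by contrapositive, using the complement of $A-B$ as the dense witness for one direction and passing to a countable dense set to secure $\Aa$-measurability in the other. The only cosmetic difference is that in the Steinhaus $\Rightarrow$ SP direction the paper applies the Steinhaus property to the disjoint pair $A+D$ and $(A+D)^c$ (so $0$ is trivially excluded from their difference), while you apply it to $A$ and $B=(A+D_0)^c$ and observe that $A-B$ misses the dense set $-D_0$; the two arguments are interchangeable.
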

\begin{proof}
	The Steinahus Property implies SP. Let $A\in\Aa\bez \Ii$ and $D$ be countable and dense. We may assume that $D$ is a subgroup. Suppose that $(A+D)\notin \Ii^\star$. Then $(A+D)^c\notin\Ii$. By the Steinhaus Property $(A+D)-(A+D)^c$ contains an open neighborhood of $0$. A contradiction since $0\notin(A+D)-(A+D)^c$.
	
	SP implies the Steinhaus Property. Assume that $A-B$ has an empty interior for $A,B\in \Aa\bez\Ii$. Then there is a countable dense set $D\se (A-B)^c$. It follows from SP that $(B+D)\cap A\neq\0$, a contradiction.
\end{proof}

Let $F\subseteq X\times Y$. Then for $x\in X$
	\[
		F_x=\{y\in Y:\; (x,y)\in F\}
	\]
	is a vertical section of $F$ at $x$. Similarly, for $y\in Y$
	\[
		F^y=\{x\in X:\; (x,y)\in F \}
	\]
	is a horizontal section of $F$ at $y$.
	
	For ($\sigma$-)algebras $\Aa\se P(X)$ and $\Bb\se P(Y)$ let $\Aa\otimes\Bb\se P(X\times Y)$ denote the ($\sigma$-)algebra generated by the rectangles of the form $A\times B$ with $A\in\Aa$ and $B\in \Bb$.
\begin{definition}
	Let $(\Aa,\Ii)$ and $(\Bb,\Jj)$ be pairs of ($\sigma$-)algebra - ($\sigma$-)ideal on Polish spaces $X$ and $Y$ respectively. Then we define the Fubini product of $\Ii$ and $\Jj$ as follows:
	\[
		K\in \Ii\otimes\Jj \IFF (\exists C\in \Aa\otimes\Bb)(K\se C \land \{x\in X: C_x\notin \Jj\}\in\Ii).
	\]	
\end{definition}
	Notice that in the case of ideals possessing Borel bases, i.e. $\Aa=\Bor(X)$ and $\Bb=\Bor(Y)$ the above definition ensures the existence of Borel base for $\Ii\otimes\Jj$.
	\begin{proposition}
	If $(\Aa\otimes\Bb,\Ii\otimes\Jj)$ has SP (WSP, VWSP), then $(\Aa,\Ii)$ and $(\Bb,\Jj)$ also have it.
\end{proposition}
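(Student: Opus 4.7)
The plan is to lift each question about $(\Aa,\Ii)$ to the corresponding one about $(\Aa\otimes\Bb,\Ii\otimes\Jj)$ by cylindrification $A\mapsto A\times Y$. The workhorse I would establish first is the cylinder observation
\[
B\notin \Ii \iff B\times Y \notin \Ii\otimes\Jj \qquad (B\in\Aa).
\]
The easy direction is clear from the definition, and for the other one, if $C\in\Aa\otimes\Bb$ is any cover of $B\times Y$ witnessing $B\times Y\in \Ii\otimes \Jj$, then for every $x\in B$ the section $C_x$ contains $Y$ and hence is $\Jj$-positive, so $B\se\{x:C_x\notin \Jj\}\in\Ii$. The same argument also yields the slightly stronger statement that any set $E\se X$ with $E\times Y\in\Ii\otimes\Jj$ already lies in $\Ii$, even when $E\notin \Aa$; this matters for the conclusion of the proof, since algebraic sums need not be in $\Aa$.

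With this in hand, all three parts follow from the product identity
\[
(A\times Y)+(D\times D')=(A+D)\times Y.
\]
For SP I would fix $A\in\Aa\bez\Ii$ and a dense $D\se X$, then pick any dense $D'\se Y$ (available since $Y$ is separable); then $D\times D'$ is dense in $X\times Y$ and $A\times Y$ is $\Ii\otimes\Jj$-positive by the observation, so SP in the product gives that $(A+D)\times Y$ is $\Ii\otimes\Jj$-residual, and one more application of the observation converts $(A+D)^c\times Y\in \Ii\otimes\Jj$ into $(A+D)^c\in\Ii$. For WSP I would take the witnessing countable dense $D^\star\se X\times Y$ and set $D=\pi_X[D^\star]$, which is countable and dense in $X$; the same computation with the projected set works because $(A\times Y)+D^\star=(A+D)\times Y$. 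VWSP differs only in that $D^\star$ is chosen after seeing $A\times Y$.

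I do not expect a real obstacle here: once the cylinder observation is verified, the argument is essentially bookkeeping. The only subtlety worth flagging is to be careful that the proof does not require $(A+D)^c$ to be a member of $\Aa$, which is why the observation is stated in the slightly stronger form allowing arbitrary $E\se X$; separability of $Y$ (to produce $D'$) and the trivial preservation of density under continuous open projection (to produce $D$ in the WSP case) are the only topological ingredients.
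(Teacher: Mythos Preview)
Your proposal is correct and follows essentially the same route as the paper: cylindrify $A\mapsto A\times Y$, apply the product hypothesis, and project back. The paper writes out only the WSP case for $(\Aa,\Ii)$ and phrases the last step as ``$\{x:R_x\in\Jj^\star\}$ is $\Ii$-residual'' together with the identification $\{x:R_x\in\Jj^\star\}=A+\pi_X(D)$, which is exactly your cylinder observation combined with your identity $(A\times Y)+D^\star=(A+\pi_X[D^\star])\times Y$.
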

\begin{proof}
	Let us consider the case of WSP for $\Ii$. Let $D$ be a witness that $\Ii\otimes\Jj$ has WSP. Let $A\in\Aa\bez\Ii$. The set $R=D+(A\times Y)$ is $\Ii\otimes\Jj$-residual, therefore
	\[
		\widetilde{R}=\{x\in X: R_x \tn{ is $\Jj$-residual}\}
	\]
	is $\Ii$-residual. Clearly, $A+\pi_X(D)=\widetilde{R}$, hence we are done.
\end{proof}
	
	The following definition is a variation of the ones found in \cite[Definition 18.5]{Kech}, \cite{Sri} and agrees with the notation given in \cite{BG}.
\begin{definition}
	Let $X$ and $Y$ be Polish spaces and let $\Ff\se P(X)$, $\Gg\se P(Y)$, $\mc{H}\se P(X\times Y)$ be families of sets. Then we say that $\Gg$ is $\mc{H}$-on-$\Ff$ if for each set $H\in\mc{H}$
	\[
		\{x\in X: H_x\in \Gg\}\in \Ff.
	\]
\end{definition}
Mainly we will be interested in the case where ${\Gg=\Jj\se P(Y)}$ is a $\sigma$-ideal, $\Ff\in\{Bor(X), \sigma(Bor(X)\cup \Ii)\}$ and $\mc{H}\in\{Bor(X\times Y), \sigma(Bor(X\times Y)\cup \Ii\otimes\Jj)\}$. Here $\sigma(\Dd)$ is the $\sigma$-algebra generated by the family $\Dd$. We will write, for example, that $\Jj$ is Borel-on-measurable instead of $Bor(X\times Y)$-on-$\sigma(Bor(X)\cup \Ii)$ if the context is clear. Notice that both $\Mm$ and $\Nn$ are Borel-on-Borel (see \cite[Exercise 22.22, 22.25]{Kech}).
	\begin{example}
		Measurable-on-measurable not necessarily implies Borel-on-Borel.
	\end{example} 
	\begin{proof}
		Let $\Jj=\{\0\}$ and take a Borel set $B$ projection of which is analytic and not Borel.
	\end{proof}
	\begin{proposition}
		Borel-on-measurable implies measurable-on-measurable.
	\end{proposition}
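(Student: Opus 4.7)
The plan is to unpack the hypothesis and the conclusion: assuming that for every Borel $H \subseteq X \times Y$ the set $\{x \in X : H_x \in \Jj\}$ belongs to $\sigma(\Bor(X) \cup \Ii)$, I want the same conclusion for every $H \in \sigma(\Bor(X \times Y) \cup \Ii \otimes \Jj)$. The natural strategy is to show that every such $H$ differs from a Borel set by a $\Jj$-negligible section for $\Ii$-almost every $x$, and then transfer the Borel-on-measurable property across that small exceptional set.

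First I would verify the standard structural fact that the $\sigma$-algebra $\sigma(\Bor(X \times Y) \cup \Ii \otimes \Jj)$ consists exactly of sets of the form $B \symdif N$ with $B \in \Bor(X \times Y)$ and $N \in \Ii \otimes \Jj$; this works because $\Ii \otimes \Jj$ is a $\sigma$-ideal, so the family of symmetric differences is closed under complements and countable unions, hence forms a $\sigma$-algebra containing both generating families. Given such a representation $H = B \symdif N$, I would invoke the definition of the Fubini product to pick a Borel set $C \in \Bor(X \times Y)$ with $N \se C$ and
\[
E := \{x \in X : C_x \notin \Jj\} \in \Ii.
\]

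For $x \notin E$ one has $C_x \in \Jj$, and since $N_x \se C_x$ we also get $N_x \in \Jj$, so $H_x = B_x \symdif N_x$ belongs to $\Jj$ precisely when $B_x$ does. Consequently the symmetric difference
\[
\{x \in X : H_x \in \Jj\} \symdif \{x \in X : B_x \in \Jj\} \se E
\]
lies in $\Ii$. By the Borel-on-measurable hypothesis applied to the Borel set $B$, the set $\{x : B_x \in \Jj\}$ is in $\sigma(\Bor(X) \cup \Ii)$; adding or removing a subset of $E \in \Ii$ keeps us inside this $\sigma$-algebra, which yields the conclusion.

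I do not anticipate a real obstacle here: the one place one has to be slightly careful is confirming that every element of $\sigma(\Bor(X \times Y) \cup \Ii \otimes \Jj)$ really is a symmetric difference of the stated form (rather than something more complicated), but this is routine once one notes that $\Ii \otimes \Jj$ is a $\sigma$-ideal with a Borel base, as observed in the remark following the definition of the Fubini product.
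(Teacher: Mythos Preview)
Your proof is correct and follows essentially the same route as the paper's: represent an arbitrary measurable set in $X\times Y$ as a Borel set perturbed by an element of $\Ii\otimes\Jj$, observe that this perturbation affects membership of vertical sections in $\Jj$ only on an $\Ii$-small set of $x$'s, and conclude by applying the Borel-on-measurable hypothesis to the Borel part. The only cosmetic difference is that the paper writes the measurable set as $(B\setminus A_1)\cup A_2$ with $A_1,A_2\in\Ii\otimes\Jj$ and tracks $\{x:C_x\notin\Jj\}$ through an explicit case split, whereas you use the cleaner symmetric-difference form $B\symdif N$; the underlying argument is identical.
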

	\begin{proof}
		Assume that $\Jj$ is Borel on measurable. Let $C\se X\times Y$ be measurable with respect to $\Ii\otimes \Jj$. Then $C=(B\bez A_1)\cup A_2$, where $B$ is Borel and $A_1, A_2\in \Ii\otimes\Jj$. Clearly
		\[
			\{x\in X: {A_2}_x\notin\Jj\}\in\Ii,
		\]
		hence it is measurable. See that
		\begin{align*}
			&\{x\in X: (B\bez A_1)_x\notin\Jj\}=\{x\in X: B_x\bez {A_1}_x\notin\Jj\}=
			\\
			&=\{x\in X: B_x\notin\Jj, {A_1}_x\in\Jj\}\cup\{x\in X: B_x\notin\Jj, {A_1}_x\notin\Jj, B_x\bez{A_1}_x\notin\Jj\}.
		\end{align*}
		Since $\{x\in X: B_x\notin\Jj\}$ is measurable, $\{x\in X: {A_1}_x\in\Jj\}\in\Ii^\star$ and
		\[
			\{x\in X: B_x\notin\Jj, {A_1}_x\notin\Jj, B_x\bez{A_1}_x\notin\Jj\}\se \{x\in X: {A_1}_x\notin\Jj\}\in\Ii,
		\]
		the set $\{x\in X: C_x\notin\Jj\}$ is measurable.
	\end{proof}
	
\section{Smital and ccc}

	Let $\Ii$ be a $\sigma$-ideal in a Polish space $X$ and assume that $\Ii$ has a Borel base. We say that $\Ii$ satisfies countable chain condition (briefly: ccc) if every family of pairwise disjoint Borel $\Ii$-positive sets is countable. Let us also recall the following cardinal coefficient 
	\[
		\cov(\Ii)=\min\{|\Aa|: \Aa\se\Ii,\, \bigcup \Aa=X\}.
	\]
\begin{theorem}\label{WSP to prawie CCC}
				Let $\Ii$ be a $\sigma$-ideal possessing WSP. Then $\Ii$ satisfies $\ccc$  or $\cov(\Ii)=\omega_1$.
			\end{theorem}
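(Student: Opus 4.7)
The plan is to prove the contrapositive in a mild sense: assume $\Ii$ fails ccc, and derive $\cov(\Ii)=\omega_1$. Since $\Ii$ is a $\sigma$-ideal containing all singletons and the whole space is not in $\Ii$, the lower bound $\cov(\Ii)\geq \omega_1$ comes for free, so only the upper bound $\cov(\Ii)\leq\omega_1$ requires argument.

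If $\Ii$ fails ccc, fix an uncountable pairwise disjoint family $\{B_\alpha:\alpha<\omega_1\}$ of Borel $\Ii$-positive sets. Let $D$ be a countable dense set witnessing WSP for $\Ii$. Then for each $\alpha<\omega_1$ the translated sum $B_\alpha+D$ is $\Ii$-residual, so its complement $C_\alpha=(B_\alpha+D)^c$ belongs to $\Ii$. The whole argument then reduces to showing that these complements already cover the space, i.e.\ that
\[
	\bigcap_{\alpha<\omega_1}(B_\alpha+D)=\0.
\]

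The key step is a short pigeonhole argument exploiting the countability of $D$ together with the pairwise disjointness of the $B_\alpha$. Suppose, toward a contradiction, that some $x$ lies in every $B_\alpha+D$. Then for each $\alpha$ we may pick $d_\alpha\in D$ with $x-d_\alpha\in B_\alpha$. Because $D$ is countable and $\omega_1$ is uncountable, some single $d\in D$ satisfies $d_\alpha=d$ for uncountably many $\alpha$; but then the point $x-d$ would belong to $B_\alpha$ for uncountably many $\alpha$, contradicting pairwise disjointness. Hence the intersection is empty, the family $\{C_\alpha:\alpha<\omega_1\}\subseteq\Ii$ covers $X$, and so $\cov(\Ii)\leq\omega_1$, finishing the proof.

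I do not anticipate a genuine obstacle here. The only thing to be careful about is that the witness $D$ for WSP is a \emph{single} fixed countable set that works simultaneously for all $\Ii$-positive Borel sets; this uniformity is exactly what makes the pigeonhole step go through. For VWSP, where $D$ may depend on $A$, the same argument would break, which is consistent with the theorem being stated for WSP.
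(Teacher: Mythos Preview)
Your proof is correct and follows essentially the same approach as the paper's: assume ccc fails via an uncountable disjoint family $\{B_\alpha\}$, use the WSP witness $D$ to get $\Ii$-residual sets $B_\alpha+D$, and apply a pigeonhole argument on the countable $D$ to show $\bigcap_\alpha(B_\alpha+D)=\emptyset$, yielding $\cov(\Ii)\le\omega_1$. The only cosmetic difference is that the paper phrases the last step as a case split (intersection empty vs.\ nonempty, the latter leading to the same contradiction), whereas you argue directly that the intersection is empty.
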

			\begin{proof}
				Let $\Ii$ be  $\sigma$-ideal with WSP and let $D$ witness it. Let $\{B_\alpha: \alpha<\omega_1\}$ be a family of pairwise disjoint Borel $\Ii$-positive sets. WSP implies that for each $\alpha<\omega_1$ a set $D+B_\alpha$ is $\Ii$-residual. If $\bigcap_{\alpha<\omega_1}(D+B_\alpha)=\0$, then $\cov(\Ii)=\omega_1$. On the other hand, if $\bigcap_{\alpha<\omega_1}(D+B_\alpha)\neq\0$ then for $x\in \bigcap_{\alpha<\omega_1}(D+B_\alpha)$ we have
				\[
					(\forall \alpha\in\omega_1) (\exists d\in D)( x\in d+B_\alpha).
				\] 
				$D$ is countable, hence there exist a set $W\se \omega_1$ of cardinality $\omega_1$ and $d\in D$ such that
				\[
					(\forall \alpha\in W)( x\in d+B_\alpha),
				\]
				which gives $x-d\in \bigcap_{\alpha\in W}B_\alpha$, a contradiction.
			\end{proof}
			The following remark improves the result obtained in \cite{CiSzyWe}.
			\begin{remark}
				Let $\Ii$ be a $\sigma$-ideal possessing WSP. Then the following statements are equivalent:
				\begin{enumerate}[(i)]
					\item For each family of sets $\{B_\alpha: \alpha<\omega_1\}\se\Bb\bez\Ii$ there exists a set $W\se\omega_1$ of cardinality $\omega_1$ such that $\bigcap_{\alpha\in W}B_\alpha\neq\0$;
					\item $\cov(\Ii)>\omega_1$.
				\end{enumerate}
			\end{remark}
			\begin{proof}
				\ti{(ii)} $\IMP$ \ti{(i)} is a part of Theorem \ref{WSP to prawie CCC}. To prove \ti{(i)} $\IMP$ \ti{(ii)} let us suppose that $\cov(\Ii)=\omega_1$. Then there is a family of sets
				\[
					\{A_\alpha: \alpha<\omega_1\}\se\Bb\cap\Ii
				\]
				for which $\bigcup_{\alpha<\omega_1}A_\alpha=X$. Set $\widetilde{A}_\alpha=\bigcup_{\beta\leq\alpha}A_\alpha$ for each $\alpha<\omega_1$. The family $\{\widetilde{A}_\alpha: \alpha<\omega_1\}$ is ascending and covers $X$. Hence $\{\widetilde{A}_{\alpha}^c: \alpha<\omega_1\}$ is descending family of $\Ii$-residual sets. Moreover, for every $W\se\omega_1$ of cardinality $\omega_1$ we have $\bigcap_{\alpha\in W}\widetilde{A}_{\alpha}^c=\0$, which contradicts \ti{(i)}.
			\end{proof}

\section{Preserving Smital properties via products}

In \cite{BFN} the authors present some results on various Smital properites in product spaces. Their setup is, in their words, as general as possible, concerned with algebras and ideals. It is not clear if they intended their results to hold for $\sigma$-algebras and $\sigma$-ideals or algebras and ideals only. The formulation of \cite[Theorem 4.2]{BFN} suggests the former since it is concerned with the Borel algebra and the families of meager and null sets. In their proof they rely implicitly on the following property.
	\begin{definition}
		Let $\Aa\se P(X\times Y)$ be a ($\sigma$-)algebra and let $\Ii\se P(X\times Y)$ be a ($\sigma$-)ideal. A pair $(\Aa, \Ii)$ has the positive rectangle property (PRP) if for every $\Ii$-positive set $A\in \Aa$ there is an $\Ii$-positive rectangle $R$ satisfying $R\se A\cup I$ for some $I\in\Ii$.
	\end{definition}
	In this section we show explicitly that PRP holds for pairs algebra-ideal. However, PRP does not hold for pairs $\sigma$-algebra - $\sigma$-ideal in general, including the relevant here pair of Borel $\sigma$-algebra and the family of null sets.

\begin{example}
	The pair $(\Bor(\RR^2), [\RR]^{\leq\omega}\otimes[\RR]^{\leq\omega})$ does not have PRP.  
\end{example}
\begin{proof}
	Let $P\se \RR$ be a perfect set such that $P\cap (P+x)$ is at most 1-point for $x\neq 0$ (see \cite{MZ}). Let us set
	\[
		B=\{(x,y): x\in P \land y\in P-x\}\bez(\RR\times\{0\}).
	\]
	$B$ is Borel and $B\notin[\RR]^{\leq\omega}\otimes[\RR]^{\leq\omega}$. If $x\in B^y$, then $x\in P$ and $x\in P-y$, therefore $B^y$ is at most 1-point.
	\\
	Let us suppose that there are sets $A_1, A_2\in\Bor(\RR^2)\bez [\RR]^{\leq\omega}$ and a set $K\in [\RR]^{\leq\omega}\otimes[\RR]^{\leq\omega}$ such that $(A_1\times A_2)\bez K\se B$. Let $T=\{x\in\RR: |K_x|>\omega\}$ and notice that $|T|\leq \omega$. Pick $x_0\in A_1\bez T$. Then $K_{x_{0}}$ is countable. By the definition of $B$ for each $y\in A_2\bez K_{x_0}$ we have $K^y\es A_1\bez (\{x_0\}\cup T)$. So $A_2\bez K_{x_0}\se K_{x_1}$ for $x_1\in A_1\bez (\{x_0\}\cup T)$, a contradiction.
	\end{proof}
It is clear that the pair $(\Bor(\RR^2), \Mm)$ has PRP, since every nonmeager set possessing the property of Baire is nonempty and open, modulo a set of the first category. What about $(\Bor(\RR^2), \Nn)$? As a warm up let us recall the following folklore result.
\begin{proposition}
	Every set $E\se [0,1]^2$ of positive measure contains a subset of the same measure which does not contain a rectangle of positive measure.
\end{proposition}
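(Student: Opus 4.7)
The plan is to find a universal ``rectangle-free'' subset $F \subseteq [0,1]^2$ of full two-dimensional Lebesgue measure and then set $E' = E \cap F$. This reduces the statement to constructing such an $F$, after which the measure condition and the rectangle-freeness of $E'$ are both immediate.

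First I would take
\[
  F = \{(x,y) \in [0,1]^2 : x+y \notin \QQ\}.
\]
Its complement is the countable union, over $q \in \QQ \cap [0,2]$, of the line segments $\{(x,y) \in [0,1]^2 : x+y = q\}$, each of planar Lebesgue measure zero. Hence $F$ has full measure and $\lambda_2(E \cap F) = \lambda_2(E)$.

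Next I would verify that $F$ contains no product $A \times B$ with $A,B \subseteq [0,1]$ of positive Lebesgue measure. If such a rectangle sat inside $F$, the sumset $A+B$ would avoid $\QQ$ entirely. But the classical Steinhaus theorem applied to $A$ and $-B$ tells us that $A+B = A-(-B)$ contains a nonempty open interval, which necessarily meets $\QQ$---a contradiction.

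Setting $E' := E \cap F$ then gives $\lambda_2(E') = \lambda_2(E)$, and any positive-measure product rectangle inside $E'$ would also lie inside $F$, which is ruled out by the previous step. The only genuine idea is the choice of $F$; once that is in hand the verification is routine, and I do not anticipate a substantive obstacle.
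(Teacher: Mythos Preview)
Your proof is correct and follows essentially the same approach as the paper: the paper intersects $E$ with $\{(x,y): x-y\notin\QQ\}$ whereas you use $\{(x,y): x+y\notin\QQ\}$, and both invoke the Steinhaus theorem to rule out positive-measure rectangles. The sign difference is immaterial.
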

\begin{proof}
	Let $E\se [0,1]^2$ have positive measure. Consider $E'=\{(x,y)\in E: x-y\in\QQ^c\}$. To see that $\lambda(E')=\lambda(E)$ let us observe that $E'_x=E_x\cap (\QQ^c+x)$ for every $x\in[0,1]$ and $\QQ^c$ is co-null. Now, if $A\times B\se E'$, $A$ and $B$ of positive measure, then $A-B$ should contain a nonempty open set (Steinhaus Theorem), but clearly $\QQ\cap (A-B)=\0$. A contradiction completes the proof.
\end{proof}
This result may be improved with the following Lemma.
\begin{lemma}\label{zbior i jego dopelnienie tnie pozytywnie kazdy open}
	There exists a set $F\se \RR$ such that $\lambda(F\cap U)>0$ and $\lambda(F^c\cap U)>0$ for every nonempty open set $U$.
\end{lemma}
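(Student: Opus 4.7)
The plan is to construct $F$ by a countable recursion along a fixed base of the topology, placing at each stage two disjoint closed nowhere dense sets of positive measure (one intended for $F$, one intended for $F^c$) inside the currently considered basic open set, while carefully avoiding the sets placed at earlier stages.

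Concretely, I would fix an enumeration $\{U_n : n \in \omega\}$ of a countable base of nonempty open subsets of $\RR$, and recursively build two sequences $(C_n)_{n\in\omega}$, $(D_n)_{n\in\omega}$ of pairwise disjoint closed nowhere dense subsets of $\RR$ of positive Lebesgue measure such that $C_n \cup D_n \se U_n$. At stage $n$, the set $K_n := \bigcup_{k<n}(C_k \cup D_k)$ is a finite union of closed nowhere dense sets, hence closed with empty interior, so $V_n := U_n \setminus K_n$ is a nonempty open set. Pick any nondegenerate closed interval $I_n \se V_n$ and, by the standard fat Cantor set construction, carve out two disjoint closed nowhere dense subsets $C_n, D_n \se I_n$, each of positive Lebesgue measure. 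This choice automatically keeps $C_n, D_n$ disjoint from every previously chosen $C_k, D_k$ with $k < n$.

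Finally I set $F = \bigcup_{n\in\omega} C_n$. By pairwise disjointness of the construction, $D_n \cap F = \0$ for every $n$, so $D_n \se F^c$. Given any nonempty open $U \se \RR$, the family $\{U_n\}$ being a base gives some $n$ with $U_n \se U$, and then $C_n \se F \cap U$ witnesses $\lambda(F \cap U) > 0$, while $D_n \se F^c \cap U$ witnesses $\lambda(F^c \cap U) > 0$.

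The only nontrivial ingredient is the construction, inside an arbitrary nondegenerate interval, of two disjoint closed nowhere dense sets of positive measure; this I would regard as routine (e.g.\ split $I_n$ into two disjoint subintervals and build a fat Cantor set inside each). No obstacle is expected in the argument: the nowhere-density of the $C_k, D_k$ is precisely what keeps $V_n$ nonempty and open at every stage, so the recursion never gets stuck.
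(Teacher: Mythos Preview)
Your proposal is correct and follows essentially the same approach as the paper: enumerate a countable base, and at stage $n$ place two disjoint fat Cantor sets inside the $n$th basic open set minus the finitely many nowhere dense sets chosen so far, then take $F$ to be the union of one of the two sequences. The paper's proof is identical up to notation (it writes $C^1_n, C^2_n$ where you write $C_n, D_n$).
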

\begin{proof}
	Let $(B_n: n\in\omega)$ be an enumeration of the basis of $\RR$. At the step $0$ let $C^1_0, C^2_0 \se B_0$ be two disjoint Cantor sets of positive measure. At the step $n+1$ let assume that we have two sequences of pairwise disjoint Cantor sets $(C^1_k: k\leq n)$ and $(C^2_k: k\leq n)$ which for all $i\in\{0,1\}$ and $k\leq n$ satisfy $\lambda(C^i_k\cap B_k)>0$. The set
	\[
		B_{n+1}\bez \bigcup_{k\leq n}(C^1_k\cup C^2_k)
	\]
	is nonempty and open, hence it contains two disjoint Cantor sets of positive measure. Denote them by $C^1_{n+1}$ and $C^2_{n+1}$. This completes the construction and $F=\bigcup_{n\in\omega}C^1_k$ is the desired set.
\end{proof}

The above Lemma will serve as a tool to prove the result from \cite{EO}.
\begin{example}[Erd\"os, Oxtoby]
	There is a set $E\se \RR^2$ such that $E\cap (A\times B)$ and $E^c\cap (A\times B)$ have positive measure for each $A, B\se \RR$ of positive measure.
\end{example}
\begin{proof}
	Let $F$ be as in the formulation of Lemma \ref{zbior i jego dopelnienie tnie pozytywnie kazdy open} and set ${E=\{(x,y)\in\RR^2: x-y\in F\}}$. Let $A, B\se\RR$ have a positive measure. Then
	\begin{align*}
		\lambda(E\cap A\times B)=\iint\chi_{F}(x-y)\chi_{A}(x)\chi_{B}(y)dxdy= \iint\chi_{F}(x)\chi_A(x+y)\chi_{B}(y)dxdy=
		\\
		=\int_F \lambda((A-x)\cap B)dx.
	\end{align*}
	$\lambda((A-x)\cap B)$ is a continuous non-negative function. Furthermore, it is positive on some interval, since $\int_\RR \lambda((A-x)\cap B)dx=\lambda(A\times B)$, so $\int_F \lambda((A-x)\cap B)dx>0$.
\end{proof}

\begin{corollary}
	$(\Bor({\RR^2}), \Nn)$ does not have PRP.
\end{corollary}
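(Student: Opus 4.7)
The plan is to derive the corollary immediately from the Erdős–Oxtoby example just established. First I would take the set $E \subseteq \RR^2$ constructed there, namely $E = \{(x,y) \in \RR^2 : x - y \in F\}$ with $F$ as in Lemma \ref{zbior i jego dopelnienie tnie pozytywnie kazdy open}. This $E$ is Borel, being the preimage of the Borel set $F$ under the continuous map $(x,y) \mapsto x - y$, and it is $\Nn$-positive (take $A = B = [0,1]$ in the Erdős–Oxtoby property). So $E$ is a legitimate test set for PRP in the pair $(\Bor(\RR^2), \Nn)$.

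Next I would argue by contradiction. Suppose $(\Bor(\RR^2), \Nn)$ has PRP. Applied to $E$, this yields an $\Nn$-positive rectangle $R = A \times B$ and a null set $N \in \Nn$ with $R \subseteq E \cup N$. Since $R$ is $\Nn$-positive, by Fubini both $A$ and $B$ must have positive Lebesgue measure. The inclusion rearranges to $(A \times B) \cap E^c \subseteq N$, forcing $\lambda((A \times B) \cap E^c) = 0$.

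This contradicts the Erdős–Oxtoby property of $E$, which guarantees that $E^c \cap (A \times B)$ has positive measure whenever $\lambda(A), \lambda(B) > 0$. There is no real obstacle here; the entire work was done in the previous example, and the only point to verify is that $E$ is Borel (so that it qualifies as a Borel $\Nn$-positive set in the hypothesis of PRP), which is immediate from its definition as a continuous preimage.
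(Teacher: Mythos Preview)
Your argument is correct and is exactly the deduction the paper has in mind: the corollary is stated without proof immediately after the Erd\H{o}s--Oxtoby example, and you have simply spelled out why that example rules out PRP for $(\Bor(\RR^2),\Nn)$. The only additional observation you make explicit---that $E$ is Borel as a continuous preimage of a Borel set---is a detail the paper leaves implicit.
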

	Now we will focus on PRP for products of algebras.
\begin{lemma}
	Let $\Aa\se P(X)$ and $\Bb\se P(Y)$ be algebras. Then $\Aa\otimes\Bb\se P(X\times Y)$ consists of finite unions of rectangles.
\end{lemma}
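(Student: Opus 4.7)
The plan is to show that the family
\[
	\mathcal{R} = \left\{ \bigcup_{i=1}^n A_i\times B_i : n\in\omega,\; A_i\in\Aa,\; B_i\in\Bb \right\}
\]
of finite unions of rectangles is itself an algebra. Since $\Aa\otimes\Bb$ is by definition the smallest algebra containing all rectangles $A\times B$ with $A\in\Aa$, $B\in\Bb$, and $\mathcal{R}$ trivially contains all such rectangles and is trivially contained in any algebra that does, establishing that $\mathcal{R}$ is an algebra yields $\Aa\otimes\Bb=\mathcal{R}$.

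First I would note the obvious: $\emptyset=\emptyset\times\emptyset\in\mathcal{R}$, $X\times Y\in\mathcal{R}$, and $\mathcal{R}$ is closed under finite unions by construction. The two remaining verifications are closure under finite intersections and under complements. For intersections, the identity
\[
	(A_1\times B_1)\cap(A_2\times B_2) = (A_1\cap A_2)\times(B_1\cap B_2)
\]
together with the assumption that $\Aa,\Bb$ are algebras shows that the intersection of two rectangles is again a rectangle. Distributing over finite unions then gives that the intersection of two elements of $\mathcal{R}$ is again in $\mathcal{R}$.

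For complementation, it suffices to compute the complement of a single rectangle, namely
\[
	(A\times B)^c = (A^c\times Y) \cup (X\times B^c),
\]
which lies in $\mathcal{R}$ since $A^c\in\Aa$ and $B^c\in\Bb$. Then for a general element $\bigcup_{i=1}^n A_i\times B_i\in\mathcal{R}$, De~Morgan gives
\[
	\left(\bigcup_{i=1}^n A_i\times B_i\right)^c = \bigcap_{i=1}^n (A_i\times B_i)^c,
\]
which is a finite intersection of elements of $\mathcal{R}$ and hence, by the previous paragraph, again a member of $\mathcal{R}$.

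There is no real obstacle; the only point worth flagging is that the argument relies on finitary closure of $\Aa$ and $\Bb$ (complements, finite unions, finite intersections), and correspondingly yields only \emph{finite} unions of rectangles. This is exactly why the analogous statement fails for $\sigma$-algebras and why the $\sigma$-algebra version of $\Aa\otimes\Bb$ has a much richer structure (e.g.\ the example with analytic projections cited earlier in the paper).
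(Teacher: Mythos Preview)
Your proof is correct and follows essentially the same strategy as the paper: show that the family of finite unions of rectangles is itself an algebra by checking closure under complements (via the complement-of-a-rectangle formula and De~Morgan) and under finite intersections (via distributivity and the fact that an intersection of rectangles is a rectangle). The only cosmetic difference is that the paper writes the complement of a rectangle as three disjoint pieces rather than your two overlapping ones, and spells out the distributivity identity explicitly; neither affects the argument.
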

\begin{proof}
	First let us observe that complements of rectangles are finite unions of rectangles:
	\[
		(A\times B)^c=(A\times B^c)\cup (A^c\times B)\cup (A^c\times B^c).
	\]
	Next, see that finite intersection of a finite union of rectangles is again a finite union of rectangles. Let $T_1$, $T_2\se \omega$ be finite. Let $A^n_k$, $B^n_k$ be rectangles from $\Aa$ and $\Bb$ respectively for every $n\in T_1$ and $k\in T_2$. Then
	\[
		\bigcap_{n\in T_1}\bigcup_{k\in T_2}A^n_k\times B^n_k=\bigcup_{f\in T_2^{T_1}}\bigcap_{n\in T_1}A^{n}_{f(n)}\times B^n_{f(n)}.
	\]
	$T_2^{T_1}$ is finite and finite intersections of rectangles are also rectangles, hence the proof is complete.
\end{proof}
	For algebra $\Aa$ and ideal $\Ii$ let $\Aa[\Ii]$ denote the algebra generated by $\Aa\cup\Ii$.
\begin{proposition}
	Let $\Aa\se P(X)$ and $\Bb\se P(Y)$ be algebras and let $\Ii$ be an ideal in $X\times Y$. Then $((\Aa\otimes\Bb)[\Ii],\Ii)$ has PRP. 
\end{proposition}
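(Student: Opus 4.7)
The plan is to peel the ideal off the product algebra and reduce to the previous lemma. By the lemma just proved, any element of $\Aa\otimes\Bb$ is a finite union of rectangles from $\Aa\times\Bb$. Moreover, since $\Ii$ is an ideal closed under finite unions, a straightforward induction on complements and unions shows that every $C\in(\Aa\otimes\Bb)[\Ii]$ can be written in the form
\[
    C=(B\setminus I_1)\cup I_2,\qquad B\in\Aa\otimes\Bb,\; I_1,I_2\in\Ii.
\]
First I would verify this normal form (this is the only routine part, handled by checking it is stable under complements and finite unions).

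Next, assume $C$ is $\Ii$-positive. Since $C\se B\cup I_2$ and $I_2\in\Ii$, the set $B$ must itself be $\Ii$-positive. Writing $B=\bigcup_{k=1}^{n}A_k\times B_k$ as a finite union of rectangles, the ideal property forces at least one rectangle $R=A_{k_0}\times B_{k_0}$ to be $\Ii$-positive (otherwise $B$ would be a finite union of members of $\Ii$, hence in $\Ii$).

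Finally, observe that $B\se C\cup I_1$, because $(B\setminus I_1)\cup I_1\es B$ and $C\cup I_1=(B\setminus I_1)\cup I_2\cup I_1\es B$. Hence
\[
    R\se B\se C\cup I_1,
\]
witnessing PRP with $I:=I_1\in\Ii$.

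I do not anticipate any real obstacle here; the whole argument is driven by the previous lemma together with the fact that an ideal is closed under finite unions, which is precisely the place where the $\sigma$-ideal case breaks down (an $\Ii$-positive countable union of rectangles need not contain an $\Ii$-positive rectangle, as illustrated by the earlier examples with $(\Bor(\RR^2),\Nn)$ and $(\Bor(\RR^2),[\RR]^{\le\omega}\otimes[\RR]^{\le\omega})$). The only care needed is in verifying the normal form for $(\Aa\otimes\Bb)[\Ii]$, which I would state as a short auxiliary claim rather than grinding through.
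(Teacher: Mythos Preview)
Your proof is correct and follows essentially the same route as the paper: the paper also reduces to the normal form for $(\Aa\otimes\Bb)[\Ii]$ (written there as $\{C\symdif I: C\in\Aa\otimes\Bb,\ I\in\Ii\}$, which is equivalent to your $(B\setminus I_1)\cup I_2$), and then invokes the previous lemma to extract an $\Ii$-positive rectangle from the finite union. Your write-up is simply more explicit about the final inclusion $R\se B\se C\cup I_1$, which the paper leaves implicit.
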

\begin{proof}
	Notice that $(\Aa\otimes\Bb)[\Ii]=\{C\symdif I: C\in\Aa\otimes \Bb,\; I\in\Ii\}$. PRP follows from the previous Lemma.
\end{proof}
	
	From now on let $\Aa\se P(X), \Bb\se P(Y)$ be $\sigma$-algebras, and $\Ii\se P(X)$, $\Jj\se P(Y)$ $\sigma$-ideals.	
	
	\begin{theorem}
	Let $\Ii$ and $\Jj$ possess WSP and assume one of the following properties
	\begin{enumerate}[(i)]
		\item $\Jj$ is Borel-on-Borel;
		\item $\Jj$ measurable-on-measurable;
		\item $(\Bor(X\times Y), \Ii\otimes\Jj)$ has PRP.
	\end{enumerate}		
	Then $\Ii\otimes \Jj$ also has WSP.
	\end{theorem}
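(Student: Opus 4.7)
The plan is to take countable dense sets $D_X \se X$ and $D_Y \se Y$ witnessing WSP for $\Ii$ and $\Jj$ respectively, and to show that $E := D_X \times D_Y$---which is countable and dense in $X \times Y$---witnesses WSP for $\Ii \otimes \Jj$. So fix a Borel $\Ii\otimes\Jj$-positive $C \se X \times Y$; one must prove $(C + E)^c \in \Ii\otimes\Jj$. Since $C + E = \bigcup_{e \in E}(C + e)$ is a countable union of Borel translates, $C + E$ is itself Borel, hence by the definition of the Fubini product it is enough to verify that $\{x : ((C + E)^c)_x \notin \Jj\} \in \Ii$.

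The whole argument hinges on the slice identity
\[
(C + E)_x = \bigcup_{d \in D_X}\bigl(C_{x - d} + D_Y\bigr), \qquad x \in X,
\]
which is immediate from unpacking the definitions. Consequently, if $C_{x - d} \notin \Jj$ for some $d \in D_X$, then WSP for $\Jj$ applied to the Borel set $C_{x - d}$ yields $C_{x-d} + D_Y \in \Jj^\star$, and hence $(C + E)_x \in \Jj^\star$. Writing $A := \{x : C_x \notin \Jj\}$, this shows $(C + E)_x \in \Jj^\star$ for every $x \in A + D_X$, so $\{x : ((C + E)^c)_x \notin \Jj\} \se (A + D_X)^c$; the task has been reduced to proving $(A + D_X)^c \in \Ii$.

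Because $C$ is Borel, the definition of $\Ii\otimes\Jj$ forces $A \notin \Ii$ (otherwise $C$ would witness itself in $\Ii\otimes\Jj$). In case (i), Borel-on-Borel makes $A$ Borel, and WSP for $\Ii$ gives $A + D_X \in \Ii^\star$ directly. In case (ii), measurable-on-measurable makes $A$ $\Ii$-measurable, so $A = B \symdif I_0$ with $B$ Borel and $I_0 \in \Ii$; then $B$ is $\Ii$-positive, WSP delivers $B + D_X \in \Ii^\star$, and from $B \se A \cup I_0$ one obtains $(A + D_X)^c \se (B + D_X)^c \cup (I_0 + D_X) \in \Ii$, completing these two cases.

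Case (iii) must be handled differently because the slice-based approach cannot place $A$ in any $\sigma$-algebra we control. Instead, apply PRP to get a Borel $\Ii\otimes\Jj$-positive rectangle $A' \times B'$ and an $I \in \Ii\otimes\Jj$ with $A' \times B' \se C \cup I$. Then $A' \notin \Ii$ and $B' \notin \Jj$, so WSP in each coordinate gives that $(A' \times B') + E = (A' + D_X) \times (B' + D_Y)$ is $\Ii\otimes\Jj$-residual (its complement is contained in $((A' + D_X)^c \times Y) \cup (X \times (B' + D_Y)^c)$, a union of two sets in $\Ii\otimes\Jj$). Finally, $(A' \times B') + E \se (C + E) \cup (I + E)$ and $I + E \in \Ii\otimes\Jj$ by translation invariance and $\sigma$-ideal closure, so $C + E$ is $\Ii\otimes\Jj$-residual. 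The principal obstacle across the three cases is the $\sigma$-algebra-level control of $A$: the symmetric-difference trick rescues case (ii), while case (iii) abandons $A$ entirely in favor of the rectangle supplied by PRP.
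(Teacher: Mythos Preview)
Your proof is correct and follows essentially the same approach as the paper: both take the product $D_X\times D_Y$ of the witnessing dense sets and analyze vertical sections, reducing to WSP for $\Ii$ applied to (a Borel $\Ii$-positive subset of) the set $A=\{x:C_x\notin\Jj\}$. The only noticeable difference is in case~(iii): the paper keeps the argument uniform by observing that PRP still yields a Borel $\Ii$-positive subset of $A$ (namely $A'$ minus an $\Ii$-null set coming from the cover of $I$), whereas you bypass $A$ entirely and work with the rectangle $(A'+D_X)\times(B'+D_Y)$ directly; both routes are short and equally valid.
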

	\begin{proof}
		Let $D_1$ and $D_2$ witness WSP for $\Ii$ and $\Jj$ respectively. Let $B\in\Bor(X\times Y)\bez \Ii\otimes \Jj$. If any of the properties \ti{(i)-(iii)} holds then a set $\widetilde{B}=\{x\in X: B_x\notin\Jj\}$ contains a Borel, $\Ii$-positive set and $D_1+\widetilde{B}$ is $\Ii$-residual. Let us observe that
		\[
			(D_1\times D_2)+B\es\bigcup_{d_1\in D_1}\bigcup_{x\in\widetilde{B}}(\{d_1+x\}\times (D_2+B_x)),
		\]
		therefore for every $x\in D_1+\widetilde{B}$ the set $((D_1\times D_2)+B)_x$ is $\Jj$-residual. Since $D_1+\widetilde{B}$ is $\Ii$-residual, the proof is complete.
	\end{proof}
	
	In \cite{BalKot} the authors showed that $\Mm\otimes \Nn$ and $\Nn\otimes \Mm$ have SP and thus WSP. The following corollary extends this result regarding WSP.
	\begin{corollary}
		Let $n\in\omega$ and $\Ii_k\in \{\Mm, \Nn\}$ for any $k\leq n$. Then $\Ii_0\otimes \Ii_1 \otimes ... \otimes \Ii_n$ has WSP.
	\end{corollary}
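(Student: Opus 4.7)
The plan is to prove the corollary by induction on $n$, at each stage invoking the preceding theorem. Read the $(n{+}1)$-fold product left-associatively, i.e.\ set $\Kk_0 = \Ii_0$ and $\Kk_{m+1} = \Kk_m \otimes \Ii_{m+1}$, so that $\Kk_n = \Ii_0 \otimes \Ii_1 \otimes \ldots \otimes \Ii_n$. This reduces each inductive step to the binary form of the theorem.

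The base case $n=0$ is immediate: if $\Ii_0 \in \{\Mm, \Nn\}$, then $\Ii_0$ has SP (and hence WSP) by the remark recorded right after the definition of the Smital properties.

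For the inductive step, suppose $\Kk_{n-1}$ has WSP, and apply the preceding theorem to the pair $(\Ii,\Jj) = (\Kk_{n-1}, \Ii_n)$. Three hypotheses must be verified: that $\Kk_{n-1}$ has WSP (inductive assumption), that $\Ii_n$ has WSP (clear, as $\Ii_n \in \{\Mm, \Nn\}$), and that one of conditions \ti{(i)--(iii)} of the theorem holds. The text already records, via Kuratowski--Ulam and Fubini, that both $\Mm$ and $\Nn$ are Borel-on-Borel, so condition \ti{(i)} is satisfied for $\Jj = \Ii_n$. The theorem then yields that $\Kk_n = \Kk_{n-1} \otimes \Ii_n$ has WSP, closing the induction.

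There is essentially no genuine obstacle here; the only point that needs a line of comment is the parenthesisation of the iterated Fubini product, which the left-associative convention handles transparently. In particular, one does not need $\Kk_{n-1}$ itself to satisfy any Borel-on-Borel type property---only the outermost factor $\Ii_n$ has to, and this is precisely why the induction passes through using nothing beyond the already-quoted fact that $\Mm$ and $\Nn$ are Borel-on-Borel.
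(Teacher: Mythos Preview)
Your argument is correct. Both proofs proceed by induction via the preceding theorem, but with a different parenthesisation. The paper invokes \cite[Lemma 3.1]{CieMi} to obtain that every finite product $\Ii_1\otimes\cdots\otimes\Ii_n$ of copies of $\Mm$ and $\Nn$ is Borel-on-Borel; this is what one needs if the inductive step is carried out right-associatively, i.e.\ with $\Ii=\Ii_0$ and $\Jj=\Ii_1\otimes\cdots\otimes\Ii_n$. Your left-associative decomposition $\Kk_n=\Kk_{n-1}\otimes\Ii_n$ places a single $\Mm$ or $\Nn$ in the role of $\Jj$, so you only need the elementary Kuratowski--Ulam / Fubini fact that $\Mm$ and $\Nn$ themselves are Borel-on-Borel over an arbitrary Polish first coordinate --- already recorded in the paper from \cite{Kech}. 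This makes your route self-contained and avoids the external citation, at the cost of not establishing the stronger (and independently useful) statement that the iterated products are Borel-on-Borel.
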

	\begin{proof}
		By \cite[Lemma 3.1]{CieMi} the ideal $\Ii_0\otimes \Ii_1 \otimes ... \otimes \Ii_n$ is Borel-on-Borel for any $n\in\omega$ and $\Ii_k\in \{\Mm, \Nn\}$, $k\leq n$.
	\end{proof}
	In \cite[Theorem 4.3]{BFN} the authors showed that if $\Bb=\Jj\cup\Jj^\star$ and $(\Aa, \Ii)$ has SP then $(A\otimes \Bb, \Ii\otimes\Jj)$ also has SP. We will generalize this result (Theorem \ref{generalizacja}). Let us start with two technical definitions.
	
\begin{definition}
	We say that a pair $(\Aa\otimes\Bb, \Ii\otimes\Jj)$ has the Tall Rectangle Hull Property (TRHP) if	for every set $C\in\Aa\otimes\Bb$
	\[
		(\exists \widetilde{C}\in \Aa, I \in \Ii, J \in \Jj)((\widetilde{C}\bez I)\times(Y\bez J)\se C \se (\widetilde{C}\times Y)\cup (I \times Y)\cup (X \times J)).
 \]
 If a set $C$ fulfills the above condition we will say that it has TRHP witnessed by the triple $(\widetilde{C}, I, J)$.
 
 Analogously we define Wide Rectangle Hull Property (WRHP):
 	 \[
		(\exists \widetilde{C}\in \Bb, I \in \Ii, J \in \Jj)((X\bez I)\times(\widetilde{C}\bez J)\se C \se (X\times\widetilde{C})\cup (I \times Y)\cup (X \times J)).
	 \] 
\end{definition}

\begin{proposition}
	If a pair $(\Aa\otimes\Bb, \Ii\otimes\Jj)$ have TRHP or WRHP then it has PRP.
\end{proposition}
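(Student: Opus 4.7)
The plan is to read the PRP rectangle off directly from the inner half of the TRHP (resp.\ WRHP) inclusion; essentially no work is needed beyond unwinding the Fubini-product definition.

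I would start in the TRHP case with an $\Ii\otimes\Jj$-positive $C\in\Aa\otimes\Bb$ and its witness triple $(\widetilde{C},I,J)$. Using the standing base assumption on $(\Aa,\Ii)$ and $(\Bb,\Jj)$, I first enlarge $I$ and $J$ to supersets $I'\in\Aa\cap\Ii$ and $J'\in\Bb\cap\Jj$, and consider
\[
R=(\widetilde{C}\bez I')\times(Y\bez J').
\]
The chain of containments $R\se(\widetilde{C}\bez I)\times(Y\bez J)\se C$ is immediate from the inner TRHP bound, and both factors of $R$ now lie in the appropriate $\sigma$-algebras, so $R\in\Aa\otimes\Bb$. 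PRP will then follow with the error set taken to be $\emptyset$, provided I can verify $R\notin\Ii\otimes\Jj$.

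Positivity of $R$ splits into two factor-wise checks. Since $J'\in\Jj$ and $\Jj$ is a proper $\sigma$-ideal, $Y\bez J'$ is $\Jj$-residual, hence $\Jj$-positive. For the other factor I would argue by contradiction: if $\widetilde{C}\bez I'\in\Ii$, then $\widetilde{C}\se I'\cup(\widetilde{C}\bez I')\in\Ii$, and the outer TRHP bound forces
\[
C\se(\widetilde{C}\times Y)\cup(I\times Y)\cup(X\times J).
\]
Each summand here lies in $\Ii\otimes\Jj$: the first is witnessed by itself and has $x$-projection $\widetilde{C}\in\Ii$, the second is contained in the $\Aa\otimes\Bb$-set $I'\times Y$ whose $x$-projection is $I'\in\Ii$, and the third is contained in $X\times J'\in\Aa\otimes\Bb$, every section of which lies in $\Jj$. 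This contradicts the $\Ii\otimes\Jj$-positivity of $C$.

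The WRHP case is handled by the mirror argument with coordinates swapped: the inner bound produces a rectangle of the form $(X\bez I)\times(\widetilde{C}\bez J)$ inside $C$, and the same enlarge-and-contradict step delivers positivity. I do not foresee any serious obstacle; the only point worth flagging is that PRP allows the containment only modulo an $\Ii\otimes\Jj$-small error, while the TRHP/WRHP inner bound already delivers a clean $R\se C$, so that slack goes unused.
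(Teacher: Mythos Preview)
Your proof is correct and follows essentially the same idea as the paper's: read the rectangle off the TRHP witness and verify its $\Ii\otimes\Jj$-positivity by contradiction via the outer TRHP bound. The only cosmetic difference is that you shrink the rectangle to $(\widetilde{C}\setminus I')\times(Y\setminus J')\subseteq C$ (using the inner bound, with empty error), whereas the paper takes the full $\widetilde{C}\times Y$ and absorbs the overshoot $(I\times Y)\cup(X\times J)$ into the PRP error term.
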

\begin{proof}
	Let $C\in\Aa\otimes\Bb$ has TRHP witnessed by $(\wt{C}, I, J)$:
	\[
		(\wt{C}\bez I)\times (Y\bez J)\se C \se (\widetilde{C}\times Y)\cup (I \times Y)\cup (X \times J))
	\]
	and assume that $C\notin\Ii\otimes\Jj$. Then $\wt{C}\times Y$ is the desired rectangle. Clearly $\wt{C}\times Y\se C$ modulo a set from $\Ii\otimes\Jj$. It is also $\Ii\otimes\Jj$-positive, otherwise
	\[
		(\widetilde{C}\times Y)\cup (I \times Y)\cup (X \times J))\in\Ii\otimes\Jj
	\]
	and also $C\in\Ii\otimes\Jj$.
	
	The proof of WRHP case is almost identical.
\end{proof}

\begin{lemma}\label{TRHP closed under countable unoins and complements}
	The family of sets possessing TRHP is closed under countable unions and complements. The same is true for the family of sets possessing WRHP. 
\end{lemma}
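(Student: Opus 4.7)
The plan is to verify both closure properties directly by producing explicit witness triples, handling TRHP first; the WRHP case will then follow by the obvious symmetry swapping the roles of $X$ and $Y$ (equivalently, of the pairs $(\Aa, \Ii)$ and $(\Bb, \Jj)$) in the argument.

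For complements, given $C$ with TRHP witnessed by $(\widetilde{C}, I, J)$, I propose $(\widetilde{C}^c, I, J)$ as the witness for $C^c$; this is legitimate since $\Aa$ is a $\sigma$-algebra, so $\widetilde{C}^c \in \Aa$. The two required inclusions for $C^c$ are obtained by taking complements of the corresponding inclusions for $C$ and simplifying via De Morgan inside $X\times Y$, using the identities $(\widetilde{C}\times Y)^c = \widetilde{C}^c \times Y$, $(I\times Y)^c = I^c \times Y$, and $(X\times J)^c = X \times J^c$. This is a routine calculation: the outer inclusion for $C$ gives the inner inclusion for $C^c$, and vice versa.

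For countable unions, given $\{C_n\}_{n \in \omega}$ with $C_n$ having TRHP witnessed by $(\widetilde{C}_n, I_n, J_n)$, I take
$$\widetilde{C} = \bigcup_{n \in \omega} \widetilde{C}_n \in \Aa, \qquad I = \bigcup_{n \in \omega} I_n \in \Ii, \qquad J = \bigcup_{n \in \omega} J_n \in \Jj,$$
using $\sigma$-closure of $\Aa$, $\Ii$, $\Jj$. The upper inclusion $\bigcup_n C_n \se (\widetilde{C}\times Y)\cup(I\times Y)\cup(X\times J)$ is immediate by taking the union of the upper inclusions for each $C_n$. For the lower inclusion, a point $(x,y) \in (\widetilde{C}\bez I)\times(Y\bez J)$ has $x \in \widetilde{C}_n$ for some specific $n$ while $x \notin I_m$ and $y \notin J_m$ hold for \emph{every} $m$; fixing that $n$ gives $(x,y) \in (\widetilde{C}_n \bez I_n)\times(Y \bez J_n) \se C_n$, as required.

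The lemma is essentially routine and I do not anticipate a real obstacle. The one point deserving attention is precisely the lower inclusion for unions: one must use the \emph{global} $I$ and $J$ defined above rather than the index-specific $I_n, J_n$, so that the exceptional sets on the $X$- and $Y$-sides remain in $\Ii$ and $\Jj$ respectively. This is exactly the step that consumes the $\sigma$-ideal hypothesis on $\Ii$ and $\Jj$ (and the $\sigma$-algebra hypothesis on $\Aa$ for $\widetilde{C}$), and it is the sole reason a countable union could plausibly fail — which here it does not.
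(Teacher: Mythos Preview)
Your proposal is correct and follows essentially the same approach as the paper: both use $(\widetilde{C}^c, I, J)$ as the witness for $C^c$ (the paper writes out the De Morgan computation explicitly, whereas you summarize it) and both take componentwise countable unions $(\bigcup_n \widetilde{C}_n, \bigcup_n I_n, \bigcup_n J_n)$ for the union case. The only cosmetic difference is the order of the two cases and the level of detail in the complement calculation.
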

\begin{proof}
	Proofs for both cases follow the same pattern, so without loss of generality let us focus on the case of TRHP.
	
	Let $C=\bigcup_{n\in\omega}C_n$ and $(\widetilde{C}_n, I_n, J_n)$ witness TRHP for $C_n$, $n\in\omega$. Then for each $n \in\omega$
	\[
		(\wt{C}_n\bez I_n)\times(Y\bez J_n)\se C_n \se (\wt{C}_n\times Y)\cup (I_n\times Y)\cup ( X\times J_n).
	\]
	Then
	\begin{align*}
		(\bigcup_{n\in\omega}\wt{C}_n\bez\bigcup_{n\in\omega}I_n)\times (Y\bez \bigcup_{n\in\omega}J_n)&\se\bigcup_{n\in\omega}(\wt{C}_n\bez I_n)\times(Y\bez J_n)\se \bigcup_{n\in\omega}C_n=C
		\\
		&\se \bigcup_{n\in\omega}\big((\wt{C}_n\times Y)\cup (I_n\times Y)\cup ( X\times J_n)\big)
		\\
		&\se ((\bigcup_{n\in\omega}C_n)\times Y) \cup ((\bigcup_{n\in\omega}I_n)\times Y)\cup (X\times(\bigcup_{n\in\omega}J_n)).
	\end{align*}
	Hence, setting $\wt{C}=\bigcup_{n\in\omega}\wt{C}_n$, $I=\bigcup_{n\in\omega}I_n$, $J=\bigcup_{n\in\omega}J_n$ completes this part of the proof.
	
	Now let $C=D^c$ for $D$ witnessing TRHP with $(\wt{D}, I, J)$. We have
	\[
		(\wt{D}\bez I) \times (Y\bez J)\se D \se (\wt{D}\times Y) \cup (I\times Y) \cup (X\times J).
	\]
	Through complementation
	\[
		\big((\wt{D}\times Y) \cup (I\times Y) \cup (X\times J)\big)^c\se C \se \big((\wt{D}\bez I) \times (Y\bez J)\big)^c.
	\]
	Let us focus on the right-hand side
	\[
		\big((\wt{D}\bez I) \times (Y\bez J)\big)^c=((\wt{D}\bez I)\times (Y\bez J)^c) \cup ((\wt{D}\bez I)^c\times Y)\se ((\wt{D})^c\times Y)\cup(X\times J)\cup (I\times Y).
	\]
	Now the left-hand side. It is an intersection of the following sets
	\begin{align*}
		(\wt{D}\times Y)^c&=(\wt{D})^c\times Y,
		\\
		(I\times Y)^c&=(X\bez I) \times Y,
		\\
		(X\times J)^c&=X\times (Y\bez J),
	\end{align*}
	which is equal to
	\[
		((\wt{D})^c\bez I) \times (Y\bez J).
	\]
	In summary
	\[
		((\wt{D})^c\bez I) \times (Y\bez J)\se C \se ((\wt{D})^c\times Y)\cup(X\times J)\cup (I\times Y).
	\]
	Then $((\wt{D})^c, I, J)$ witnesses TRHP for $C$. The proof is complete.
\end{proof}

\begin{theorem}
	Let $\Cc=\Aa\otimes\Bb$. Then
	\begin{enumerate}[(i)]
		\item if $\Aa=\Ii\cup\Ii^\star$ then $(\Cc, \Ii\otimes\Jj)$ has WRHP.
		\item if $\Bb=\Jj\cup\Jj^\star$ then $(\Cc, \Ii\otimes\Jj)$ has TRHP.
	\end{enumerate}		
\end{theorem}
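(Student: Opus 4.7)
The proof plan is to exploit the closure properties of TRHP/WRHP established in Lemma \ref{TRHP closed under countable unoins and complements} to reduce everything to the generating rectangles. By symmetry between the two statements (swap the roles of the coordinates and of $\Ii, \Jj$), it suffices to prove \textit{(ii)}; case \textit{(i)} is handled by the same argument mutatis mutandis. The plan is thus to show that, under the hypothesis $\Bb = \Jj \cup \Jj^\star$, every rectangle $A \times B$ with $A \in \Aa$ and $B \in \Bb$ has TRHP, and then to invoke the lemma to extend this to all of $\Cc = \Aa \otimes \Bb$.

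First I would observe that the family $\mathcal{T}$ of subsets of $X \times Y$ having TRHP is a $\sigma$-algebra: it trivially contains $\emptyset$ (witnessed by $(\emptyset, \emptyset, \emptyset)$), and Lemma \ref{TRHP closed under countable unoins and complements} gives closure under countable unions and complements. Consequently, once TRHP is verified on rectangles $A \times B$, which generate $\Aa \otimes \Bb$ as a $\sigma$-algebra, we automatically obtain TRHP on all of $\Cc$.

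The verification on rectangles splits into two easy cases using the hypothesis $\Bb = \Jj \cup \Jj^\star$. If $B \in \Jj^\star$, set $\widetilde{C} = A$, $I = \emptyset$, $J = Y \setminus B \in \Jj$. Then
\[
(\widetilde{C} \setminus I) \times (Y \setminus J) = A \times B = A \times B \se (A \times Y) \cup (\emptyset \times Y) \cup (X \times J),
\]
so $(\widetilde{C}, I, J)$ witnesses TRHP for $A \times B$. If instead $B \in \Jj$, set $\widetilde{C} = \emptyset$, $I = \emptyset$, $J = B \in \Jj$. Then
\[
(\widetilde{C} \setminus I) \times (Y \setminus J) = \emptyset \se A \times B \se (\emptyset \times Y) \cup (\emptyset \times Y) \cup (X \times B),
\]
and again $(\widetilde{C}, I, J)$ is a valid witness.

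There is no real obstacle here beyond correctly invoking the closure lemma; the only point worth a moment of care is checking that the closure under complements in Lemma \ref{TRHP closed under countable unoins and complements} uses the \emph{same} ideal sets $I$ and $J$ for $D$ and $D^c$, so that the family really is a $\sigma$-algebra rather than merely a Dynkin-type class, but this is exactly what the lemma's proof delivers. The proof of \textit{(i)} is identical after transposing coordinates: under $\Aa = \Ii \cup \Ii^\star$, rectangles $A \times B$ with $A \in \Ii^\star$ (resp.\ $A \in \Ii$) are witnessed by $(B, X \setminus A, \emptyset)$ (resp.\ $(\emptyset, A, \emptyset)$) in the WRHP sense, and WRHP then lifts from generators to $\Cc$ via the analogous closure statement.
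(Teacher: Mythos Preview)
Your proof is correct and follows essentially the same route as the paper: verify TRHP on the generating rectangles with exactly the same witnesses ($\widetilde{C}=A,\,I=\emptyset,\,J=B^c$ when $B\in\Jj^\star$; $\widetilde{C}=\emptyset,\,I=\emptyset,\,J=B$ when $B\in\Jj$), then invoke Lemma~\ref{TRHP closed under countable unoins and complements} to lift to all of $\Cc=\Aa\otimes\Bb$. Your presentation is somewhat more explicit in framing the TRHP sets as a $\sigma$-algebra and in spelling out the WRHP witnesses for case \textit{(i)}, but the argument is the same.
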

\begin{proof}
	Let us prove $(ii)$ (the proof of $(i)$ is similar). Let us notice that rectangles from $\Cc$ have TRHP. Indeed, if $C=A\times B$ then set
	\begin{align*}
		\wt{C}&=A,\; I=\0,\; J=B^c \tn{ if } B\in\Jj^\star
		\\
		\wt{C}&=\0,\; I=\0,\; J=B \tn{ if } B\in\Jj.
	\end{align*}
	The rest of the proof relies on Lemma \ref{TRHP closed under countable unoins and complements} which allows us to perform an induction over the hierarchy of sets making up the $\sigma$-algebra $\Cc$.
\end{proof}


\begin{theorem}\label{generalizacja}
	Let $\Cc=\Aa\otimes \Bb$ and assume that 
	\begin{enumerate}[(i)]
		\item $(\Cc, \Ii\otimes\Jj)$ has TRHP and $(\Aa, \Ii)$ has SP, or
		\item $(\Cc, \Ii\otimes\Jj)$ has WRHP and $(\Bb, \Jj)$ has SP.
	\end{enumerate}
	Then $(\Cc,\Ii\otimes\Jj)$ has SP.
\end{theorem}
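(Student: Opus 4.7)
My plan is to prove case $(i)$ in detail, with case $(ii)$ following by symmetry (swap the coordinates and replace TRHP with WRHP). The strategy is to reduce the two--dimensional SP for $\Ii\otimes\Jj$ to the one--dimensional SP for $(\Aa,\Ii)$ applied to the ``tall hull'' $\wt{C}$ provided by TRHP, and then read off $\Jj$-residuality of the vertical sections of $C+D$ from the lower TRHP inclusion.

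Fix $C\in\Cc\bez(\Ii\otimes\Jj)$ and a dense set $D\se X\times Y$. First I would replace $D$ by a countable dense subset $D'$, which exists since $X\times Y$ is separable. Because $C+D'\se C+D$ gives $(C+D)^c\se(C+D')^c$, it is enough to prove $(C+D')^c\in\Ii\otimes\Jj$, so from here on I assume $D$ is countable. This reduction is the one genuinely important bookkeeping step, because without it the set $I+\pi_X(D)$ appearing below need not lie in $\Ii$.

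Next apply TRHP to $C$ to obtain $\wt{C}\in\Aa$, $I\in\Ii$, $J\in\Jj$ with
\[
(\wt{C}\bez I)\times(Y\bez J)\se C\se(\wt{C}\times Y)\cup(I\times Y)\cup(X\times J).
\]
The upper inclusion forces $\wt{C}\notin\Ii$: otherwise the right--hand union lies in $\Ii\otimes\Jj$, contradicting $C\notin\Ii\otimes\Jj$. Set $D_1=\pi_X(D)$, which is countable and dense in $X$, so by SP for $(\Aa,\Ii)$ the set $\wt{C}+D_1$ is $\Ii$-residual. Since $\Ii$ is translation--invariant and $D_1$ is countable, $I+D_1\in\Ii$, and therefore
\[
(\wt{C}\bez I)+D_1\es(\wt{C}+D_1)\bez(I+D_1)
\]
is $\Ii$-residual as well.

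To finish, pick any $x\in(\wt{C}\bez I)+D_1$ and write $x=x_0+d_1$ with $x_0\in\wt{C}\bez I$ and $(d_1,d_2)\in D$. The lower TRHP inclusion gives $Y\bez J\se C_{x_0}$, so
\[
(C+D)_x\es C_{x_0}+d_2\es(Y\bez J)+d_2=Y\bez(J+d_2),
\]
and $J+d_2\in\Jj$ by translation--invariance of $\Jj$. Hence $(C+D)_x\in\Jj^\star$ for every $x$ in the $\Ii$-residual set $(\wt{C}\bez I)+D_1$, and $C+D\in\Cc$ by translation--invariance of $\Cc$ together with countability of $D$. By the definition of the Fubini product this is precisely the statement that $C+D$ is $(\Ii\otimes\Jj)$-residual. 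The main obstacle, as flagged above, is the possible uncountability of $D$; after that is dealt with, everything reduces to chasing the two inclusions supplied by TRHP.
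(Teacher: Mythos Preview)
Your argument is correct and follows the same route as the paper's: extract a tall rectangle from TRHP, apply SP on the first coordinate to the base of that rectangle, and read off $\Jj$-residuality of vertical sections from the lower TRHP inclusion.

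There is one small difference worth noting. You flag the reduction to countable $D$ as ``the one genuinely important bookkeeping step'' because otherwise $I+\pi_X(D)$ need not lie in $\Ii$. The paper sidesteps this entirely: using the standing hypothesis that every member of $\Ii$ is covered by some set in $\Aa\cap\Ii$, one may replace $I$ by such a cover and set $\widetilde{B}=\widetilde{C}\setminus I\in\Aa\setminus\Ii$; then SP applies directly to $\widetilde{B}$ with the full (possibly uncountable) dense set $D_1=\pi_X(D)$, and the detour through $I+D_1$ never arises. So for the specific obstacle you name, the countability reduction is not needed. That said, your reduction does buy something the paper leaves implicit: it guarantees $C+D\in\Cc$, which is what makes the final appeal to the definition of $\Ii\otimes\Jj$ completely clean. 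The paper simply asserts that $D+B$ is $\Ii\otimes\Jj$-residual once the sections are controlled; your version makes the measurable cover explicit.
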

\begin{proof}
	Assume $(i)$. Let $D\se X\times Y$ be dense, set $D_1=\pi_1(D)$ and let $B\in \Cc$ be $\Ii\otimes\Jj$-positive. Then there are $\widetilde{B}\in\Aa\bez \Ii$ and $J\in\Jj$ such that $\widetilde{B}\times (Y\bez J)\se B$. It follows that
	\[
		D+B\es D+(\widetilde{B}\times (Y\bez J))\es \bigcup_{d_1\in D_1}\bigcup_{d_2\in D_{d_1}}(d_1+\widetilde{B})\times (d_2+Y\bez J)).
	\]
	Therefore for every $x\in D_1+\widetilde{B}$ the set $(D+B)_x$ contains a translation of $Y\bez J$. By SP $D_1+\widetilde{B}\in \Ii^{\star}$ thus $D+B$ is $\Ii\otimes\Jj$-residual.
	
	The proof assuming $(ii)$ is analogous.
\end{proof}

\section{Maximal invariant $\sigma$-ideals with Borel bases}

There is a surprising connection between maximal invariant $\sigma$-ideals with Borel bases and Smital properties.

\begin{proposition}\label{VWSP-maximality}
	The following are equivalent:
	\begin{enumerate}[(i)]
		\item $\Ii$ has VWSP; 
		\item $\Ii$ is maximal among invariant proper $\sigma$-ideals with Borel base.
\end{enumerate} 
\end{proposition}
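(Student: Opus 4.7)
The plan is to prove both implications directly. The crucial observation linking the two conditions is that ``$A+D$ is $\Ii$-residual'' is equivalent to $X\se(A+D)\cup I$ for some $I\in\Ii$; this turns VWSP into an improperness statement for a suitably engineered invariant $\sigma$-ideal. Since (ii) presupposes $\Ii$ to lie in the class of invariant $\sigma$-ideals, I work under the standing assumption that $\Ii$ is translation-invariant.

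For (i) $\IMP$ (ii), I would argue by contradiction. Suppose $\Jj$ is an invariant proper $\sigma$-ideal with Borel base that properly contains $\Ii$, and pick a Borel set $B\in\Jj\bez\Ii$. VWSP supplies a countable $D\se X$ with $(B+D)^c\in\Ii\se\Jj$. Because $\Jj$ is invariant and closed under countable unions, $B+D=\bigcup_{d\in D}(B+d)\in\Jj$ as well. Consequently $X=(B+D)\cup(B+D)^c\in\Jj$, contradicting the properness of $\Jj$.

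For (ii) $\IMP$ (i), fix $A\in\Bor(X)\bez\Ii$ and define
\[
\Jj=\{B\se X\colon (\exists\text{ countable }D\se X)(\exists I\in\Ii)\,(B\se (A+D)\cup I)\}.
\]
I would check that $\Jj$ is an invariant $\sigma$-ideal with Borel base that strictly extends $\Ii$: closure under countable unions follows from $\bigcup_n(A+D_n)=A+\bigcup_n D_n$ with $\bigcup_n D_n$ still countable; invariance is inherited from that of $\Ii$ together with abelianness of $X$; the Borel base is clear, since $A+D$ is a countable union of Borel translates of $A$ and $I$ may be chosen Borel; finally $A\in\Jj\bez\Ii$ (take $D=\{0\}$, $I=\0$), hence $\Ii\sen\Jj$. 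The maximality of $\Ii$ then forces $\Jj$ to fail to be proper, i.e.\ $X\in\Jj$. Unpacking, there are a countable $D$ and $I\in\Ii$ with $X\se(A+D)\cup I$, so $(A+D)^c\in\Ii$, which is precisely VWSP at $A$.

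The main conceptual step is identifying the right ideal $\Jj$ in (ii)$\IMP$(i); once it is in place, the verifications of $\sigma$-ideal structure, invariance, and the Borel base condition are routine calculations, and the desired countable $D$ drops out immediately from the improperness forced by maximality.
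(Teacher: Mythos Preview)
Your proposal is correct and follows essentially the same approach as the paper: both directions hinge on the invariant $\sigma$-ideal generated by $\Ii$ together with all countable sums of translates of a fixed Borel $\Ii$-positive set, with the paper's $\Ii'=\{A\cup C: A\in\Ii,\ C\se D+B,\ |D|\le\omega\}$ coinciding with your $\Jj$. The only cosmetic difference is that you argue $(ii)\Rightarrow(i)$ directly for an arbitrary $A$, while the paper phrases it as a contradiction assuming VWSP fails at some $B$; you also spell out the Borel-base and invariance verifications that the paper leaves implicit.
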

\begin{proof}
	$(i)\IMP (ii):$ Let us suppose that $\Jj\nes\Ii$ is such an ideal. Let $A$ be a Borel set from $\Jj\bez\Ii$. Then there exists a countable set $D$ such that $D+A$ is $\Ii$-residual, therefore $\Jj$-residual, hence $\Jj$ is not proper.
	
	$(ii)\IMP (i):$ Let us suppose that there is a set $B\in Bor\bez \Ii$ for which $B+D$ is not $\Ii$-residual for every countable set $D$. Consider a family
	\[
		\Ii'=\{A\cup C: A\in\Ii \land (\exists D)(C\se D+B \land |D|\leq\omega)\}.
	\]
	It is an invariant proper $\sigma$-ideal satisfying $\Ii\sen\Ii'$, which brings a contradiction.
\end{proof}

\begin{proposition}
	Let $\{\Ii_n: n\in\omega\}$ be a countable family of pairwise distinct maximal invariant $\sigma$-ideals on $X$ with Borel bases. Then for each $n\in\omega$ the $\sigma$-ideal $\Ii_n$ is orthogonal to $\bigcap_{k\in\omega\bez\{n\}}\Ii_k$.
\end{proposition}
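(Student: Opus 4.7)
The strategy is a two-stage argument. First I would establish that any two distinct members of the family are pairwise orthogonal, i.e.\ that $X$ admits a Borel partition $X = P \cup Q$ with $P$ in one of the two ideals and $Q$ in the other. Then I would glue the countably many decompositions obtained in this way into a single partition witnessing $\Ii_n \perp \bigcap_{k \neq n}\Ii_k$.

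For the pairwise step, fix $m \neq n$. Since $\Ii_m$ and $\Ii_n$ are both maximal and proper, neither can strictly contain the other; using that both have Borel bases, one can pick a Borel set $A \in \Ii_m \bez \Ii_n$ (swapping roles if necessary). Now I invoke Proposition \ref{VWSP-maximality}, which tells me that every maximal invariant $\sigma$-ideal with Borel base satisfies VWSP; applying VWSP for $\Ii_n$ to the Borel set $A \notin \Ii_n$ yields a countable $D \se X$ such that $(A+D)^c \in \Ii_n$. On the other hand, $A + D = \bigcup_{d \in D}(A+d)$ is a countable union of Borel translates of $A$, hence Borel; since $\Ii_m$ is invariant, each $A + d$ lies in $\Ii_m$, and $\sigma$-completeness gives $A + D \in \Ii_m$. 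Thus the Borel decomposition $X = (A+D) \cup (A+D)^c$ witnesses orthogonality of $\Ii_m$ and $\Ii_n$, with the first piece in $\Ii_m$ and the second in $\Ii_n$.

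For the gluing step, fix $n$ and for every $k \in \omega \bez \{n\}$ apply the pairwise step to produce Borel sets $P_k, Q_k$ with $P_k \cup Q_k = X$, $P_k \in \Ii_k$, and $Q_k \in \Ii_n$. Put $Q = \bigcup_{k \neq n} Q_k$; as a countable union of members of $\Ii_n$, it is a Borel set lying in $\Ii_n$. Its complement satisfies $X \bez Q = \bigcap_{k \neq n}(X \bez Q_k) \se P_j \in \Ii_j$ for every $j \neq n$, and heredity of each $\Ii_j$ gives $X \bez Q \in \bigcap_{k \neq n} \Ii_k$. The decomposition $X = Q \cup (X \bez Q)$ then witnesses the desired orthogonality.

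The only delicate point I anticipate is the pairwise step, namely checking that the set $A + D$ produced by VWSP for $\Ii_n$ actually lands in $\Ii_m$; this is where invariance together with $\sigma$-completeness of $\Ii_m$ is used, exploiting that $A + D$ is a countable union of translates of $A \in \Ii_m$. After that observation the gluing is purely a bookkeeping exercise with closure of $\sigma$-ideals under countable unions and subsets.
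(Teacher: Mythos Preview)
Your proof is correct and follows essentially the same route as the paper: use Proposition~\ref{VWSP-maximality} to get VWSP, apply it to a Borel set separating two of the ideals to obtain a countable sum that lies in one ideal and is residual for the other, then take a countable union to handle all $k\neq n$ at once. The only cosmetic difference is that the paper chooses $A_k\in\Ii_n\setminus\Ii_k$ and invokes VWSP for $\Ii_k$, whereas you pick $A\in\Ii_k\setminus\Ii_n$ and invoke VWSP for $\Ii_n$; by symmetry both work, and your ``swapping roles if necessary'' is unnecessary since maximality and distinctness already guarantee Borel witnesses in both differences.
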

\begin{proof}
	Fix $n\in\omega$.	There are sets $A_k\in (\Bor(X)\cap\Ii_n)\bez\Ii_k$, $k\in\omega\bez\{n\}$. By Proposition \ref{VWSP-maximality} for each of them there is countable set $C_k$ such that $A_k+C_k\in \Ii_n\cap \Ii_k^\star$. Therefore
	\[
		\bigcup_{k\in\omega\bez\{n\}}(A_k+C_k)\in \Ii_n\cap \bigcap_{k\in\omega\bez\{n\}}\Ii_k^\star.
	\]
\end{proof}

\begin{corollary}
	Let $\Ii$ be a maximal invariant $\sigma$-ideal with a Borel base different from $\Mm$ and $\Nn$. Then there is $A\in\Ii\cap(\Mm\cap\Nn)^\star$.
\end{corollary}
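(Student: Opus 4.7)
The plan is to invoke the preceding Proposition on the three--element family $\{\Ii, \Mm, \Nn\}$. For this to be a legal invocation, I first need to check that $\Mm$ and $\Nn$ are themselves maximal invariant $\sigma$-ideals with Borel bases. Both are translation invariant and possess Borel bases, and as observed right after the definition of the Smital properties they both enjoy SP, hence in particular VWSP; Proposition \ref{VWSP-maximality} then delivers the required maximality.

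By the hypothesis $\Ii$ is distinct from $\Mm$ and $\Nn$, and of course $\Mm \neq \Nn$, so $\{\Ii, \Mm, \Nn\}$ is a countable collection of pairwise distinct maximal invariant $\sigma$-ideals with Borel bases. Applying the preceding Proposition with the distinguished index picking out $\Ii$, we obtain a set
\[
	A \;\in\; \Ii \cap \bigcap_{\Jj \in \{\Mm,\Nn\}} \Jj^\star,
\]
that is, $A \in \Ii$, $A^c \in \Mm$ and $A^c \in \Nn$. Since $\Mm$ and $\Nn$ are $\sigma$-ideals, $A^c \in \Mm \cap \Nn$, equivalently $A \in (\Mm \cap \Nn)^\star$, and this is precisely the conclusion sought.

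There is no serious obstacle here beyond this bookkeeping: the genuine content sits in the previous Proposition, which in turn rests on the VWSP-maximality correspondence of Proposition \ref{VWSP-maximality}. The only small point to be careful about is the passage from "$A^c$ belongs to every $\Jj^\star$ individually'' to "$A^c \in \bigcap \Jj$'', but this is immediate from the definition of intersection.
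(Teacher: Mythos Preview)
Your proof is correct and is exactly the intended derivation: the paper gives no separate proof of this corollary, precisely because it is meant to follow by applying the preceding Proposition to the finite family $\{\Ii,\Mm,\Nn\}$, using the identity $\Mm^\star\cap\Nn^\star=(\Mm\cap\Nn)^\star$. The only cosmetic slip is in your last sentence, where you write ``$A^c$ belongs to every $\Jj^\star$'' while you mean ``$A$ belongs to every $\Jj^\star$'' (equivalently, $A^c$ belongs to every $\Jj$); you had it right two lines earlier.
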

	Let us now focus on $X=2^\omega$. The following result incorporates techniques similar to these used in \cite[Theorem 3.1]{Zak}.
\begin{theorem}
	There are $\c$ many maximal invariant $\sigma$-ideals on $2^\omega$.
\end{theorem}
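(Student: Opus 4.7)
My plan is to apply Proposition \ref{VWSP-maximality} (maximality equals VWSP) and construct $\mathfrak{c}$ many pairwise distinct maximal invariant $\sigma$-ideals with Borel bases on $2^\omega$ by first producing a parametrized family of ``initial'' invariant proper $\sigma$-ideals with Borel base and then extending each to a maximal one in a controlled way.

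\emph{Step 1: Zorn-style extension.} I start by establishing that every invariant proper $\sigma$-ideal $\Ii_0$ with Borel base on $2^\omega$ is contained in a maximal such ideal. In the poset of invariant proper $\sigma$-ideals with Borel base extending $\Ii_0$, any ascending chain has a union that is again such an ideal: invariance and countable closure are immediate, properness holds because $2^\omega$ in the union would force it into some chain member, and the Borel base is inherited because each element of the union lies in some chain member and is there covered by a Borel witness. Zorn's lemma then supplies a maximal element.

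\emph{Step 2: Parametrized construction.} Following the scheme of \cite[Theorem 3.1]{Zak}, I would build by transfinite recursion of length $\mathfrak{c}$ a family $\{B_x : x \in 2^\omega\}$ of Borel sets together with invariant proper $\sigma$-ideals $\Ii_x$ with Borel base satisfying, for every $x \in 2^\omega$:
\begin{enumerate}[(a)]
\item $B_x \in \Ii_x$;
\item for each $y \neq x$, the invariant $\sigma$-ideal with Borel base generated by $\Ii_x \cup \{B_y\}$ is not proper; equivalently, there exist a countable $D \subseteq 2^\omega$ and $I \in \Ii_x$ with $(D+B_y) \cup I = 2^\omega$.
\end{enumerate}
By the Zorn argument of Step 1, condition (b) is exactly the statement that no proper invariant $\sigma$-ideal with Borel base extending $\Ii_x$ can contain $B_y$.

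\emph{Step 3: Extraction of distinct maximal ideals.} For each $x \in 2^\omega$, Step 1 supplies a maximal invariant $\sigma$-ideal $\widetilde{\Ii}_x$ with Borel base extending $\Ii_x$. Then $B_x \in \widetilde{\Ii}_x$ by (a), while (b) applied to the pair $(y, x)$ (taking $y$ in the role of ``$x$'') yields $B_x \notin \widetilde{\Ii}_y$ for every $y \neq x$. The family $\{\widetilde{\Ii}_x : x \in 2^\omega\}$ therefore consists of $\mathfrak{c}$ pairwise distinct maximal invariant $\sigma$-ideals with Borel bases.

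The principal obstacle is the recursion in Step 2: simultaneously arranging (a) and (b) for all $\mathfrak{c}$ parameters while preserving properness and the Borel base property of each $\Ii_x$. Each $B_x$ must be $\Ii_x$-small, yet for every $y \neq x$ its translates combined with an $\Ii_y$-small set must exhaust $2^\omega$. Achieving this balance calls for the careful combinatorial bookkeeping of \cite{Zak}: selecting the $B_x$ as ``independent'' Borel (e.g., perfect) witnesses in $2^\omega$, and tuning the $\Ii_x$ along the recursion so that the required translates of each $B_y$ are absorbed without ever forcing a non-proper ideal.
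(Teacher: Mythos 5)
There is a genuine gap, in fact two. First, your Step 1 does not work as stated: in the poset of invariant proper $\sigma$-ideals with Borel base, an increasing chain of countable cofinality has as its union an ideal that is in general \emph{not} a $\sigma$-ideal (take $A_n\in\Ii_{n+1}\bez\Ii_n$; then $\bigcup_n A_n$ need not lie in any member of the chain), so "countable closure is immediate" is false and Zorn's lemma does not apply directly. Passing to the $\sigma$-ideal generated by the union restores countable closure but can destroy properness, so the existence of an upper bound — and hence the claim that every invariant proper $\sigma$-ideal with Borel base extends to a maximal one — is not established. Second, Steps 2--3 are only a programme: the transfinite recursion that would simultaneously secure (a) and (b) for all $\c$ parameters is precisely the hard combinatorial content, and you explicitly defer it. As written, the proposal does not prove the theorem.

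The paper avoids both difficulties by exhibiting the maximal ideals explicitly and never performing a maximalization step. Fix an AD family $\{A_\alpha:\alpha<\c\}$ on $\omega$ and identify $2^\omega$ with $2^{A_\alpha}\times 2^{\omega\bez A_\alpha}$; let $\Ii_\alpha$ be (essentially) the Fubini product $\Mm(2^{A_\alpha})\otimes\Nn(2^{\omega\bez A_\alpha})$, pulled back to $2^\omega$. Each $\Ii_\alpha$ has WSP (it is a Fubini product of ideals with WSP and the Borel-on-Borel property), hence VWSP, and by Proposition \ref{VWSP-maximality} VWSP is \emph{equivalent} to maximality among invariant proper $\sigma$-ideals with Borel base — so maximality comes for free. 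Pairwise distinctness is where almost-disjointness is used: a set of the form $\{x: x\rest(A_\alpha\bez A_\beta)\in M\}$ with $M$ meager but non-null lies in $\Ii_\alpha$ (its sections over a comeager set of $x\in 2^{A_\alpha}$ are empty), yet every section over $2^{A_\beta}$ is a non-null product, so it is not in $\Ii_\beta$. If you want to salvage your route, you would need either a correct argument that maximal extensions exist (nontrivial for $\sigma$-ideals) or, better, to follow the paper and note that the equivalence of maximality with VWSP lets you certify maximality of concretely defined ideals directly.
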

\begin{proof}
	Let $\{A_\alpha: \alpha<\c\}$ be an AD family on $\omega$, i.e. for all distinct  $\alpha, \beta<\c$ the set $A_\alpha\cap A_\beta$ is finite. For every $\alpha<\c$ set
	\begin{align*}
		\Ii_\alpha=\{A\se 2^\omega:\; &(\exists B\in \Bor(2^\omega)A\se B\land(\exists M\in\Mm(2^{A_\alpha}))(\forall x\in 2^{A_\alpha})
		\\
		&(x\notin M \to B^{\alpha}_x\in\Nn(2^{\omega\bez A_\alpha}))\},
	\end{align*} 
	where $B^\alpha_x=\{y\rest \omega\bez A_\alpha:\; y\in B,\; y\rest A_\alpha=x\}$.
	
	Let us show that $I_\alpha\neq I_\beta$ if $\alpha\neq \beta$. Set
	\begin{align*}
		&M\in \Mm(2^{A_\alpha\bez A_\beta})\bez \Nn(2^{A_\alpha\bez A_\beta}),
		\\
		&C=\{x\in 2^\omega: x\rest (A_\alpha\bez A_\beta)\in M\}.
	\end{align*}
	Notice that $C\in I_\alpha$. The set
	\[
		M_\alpha=\bigcup_{t\in 2^{A_\alpha\cap A_\beta}}\{y\in 2^{A_\alpha}: y\rest (A_{\alpha}\cap A_{\beta})=t\land y\rest(A_\alpha \bez A_\beta)\in M\}
	\]
	is a finite union of meager sets, hence meager. For each $x\in 2^{A_\alpha}\bez M_\alpha$ the set $C^\alpha_x$ is empty.
	\\
	On the other hand for each $x\in 2^{A_\beta}$
	\[
		C^\beta_x=\{y\in 2^{\omega\bez A_\beta}: y\rest (A_\alpha\bez A_\beta)\in M\}.
	\]
	The above set may be considered as a product $M\times 2^{\omega\bez (A_\alpha \cup A_\beta)}$ of non-null set and the whole space, which is not null.
	
	Now we will show that every $\Ii_\alpha$ is maximal among invariant $\sigma$-ideals on $2^\omega$ with Borel bases.
	Each $\Ii_\alpha$ is essentially $\Mm(2^{A_\alpha})\otimes\Nn(2^{\omega\bez A_\alpha})$. It follows that $\Ii_\alpha$ has WSP, thus by Proposition \ref{VWSP-maximality} the proof is complete.
	\end{proof}
	The reasoning in the above Theorem does not translate for the case of $\RR$. We may ask the following question.
\begin{question}\label{question M N only maximal}
	Are $\Mm$ and $\Nn$ the only maximal invariant $\sigma$-ideals with Borel bases in $\RR$?
\end{question}

\begin{question}
	Is it true that for every set $G\in(\Mm\cap\Nn)^\star$ there is a countable set $C$ such that $C+G=\RR$?
\end{question}
Positive answer to this question would also answer positively Question \ref{question M N only maximal}.

\end{document}